\newtheorem{thm}{Theorem}[section]
\newtheorem{lem}[thm]{Lemma}
\newtheorem{cor}[thm]{Corollary}
\begin{document}

\nocite{*}

\title{\textbf{On the Spectrum of the \\ Generalised Petersen Graphs}}

\author{Adrian W. Dudek\footnote{The author is gracious of the financial support provided by an Australian Postgraduate Award and an ANU Supplementary Scholarship.} \\ 
Mathematical Sciences Institute \\
The Australian National University \\ 
\texttt{adrian.dudek@anu.edu.au}}
\date{}

\maketitle

\begin{abstract}
We show that the gap between the two greatest eigenvalues of the generalised Petersen graphs $P(n,k)$ tends to zero as $n \rightarrow \infty$. Moreover, we provide explicit upper bounds on the size of this gap. It follows that these graphs have poor expansion properties for large values of $n$. We also show that a positive proportion of the eigenvalues of $P(n,k)$ tend to the valency.
\end{abstract}

\section{Introduction}

Let $X$ be a $d$-regular, connected graph on $n$ vertices.  We can list the eigenvalues of the adjacency matrix of $X$ as
$$d=\lambda_1(X) > \lambda_2(X) \geq \cdots \geq \lambda_{n} (X),$$
noting that the greatest eigenvalue is equal to the valency of the graph. The set of these eigenvalues is known as the spectrum of $X$ and the spectral gap of $X$ is defined to be the difference of the two greatest eigenvalues.

The main purpose of this paper is to prove two results about the spectrum of the generalised Petersen graphs. If one considers positive integers $n$ and $k$ such that $n\geq 3$ and $k \leq n/2$, then we can define the generalised Petersen graph $P(n,k)$ to be the graph with vertex set
$$V=\{a_i, b_i : 0 \leq i \leq n-1\}$$
and edge set
$$E=\{a_i a_{i+1}, a_i b_i, b_i b_{i+k} : 0 \leq i \leq n-1 \}.$$
Note that the subscripts must be considered by their image modulo $n$. 

\begin{figure}[hb]
\centering
\includegraphics[width=0.45\linewidth,angle=270]{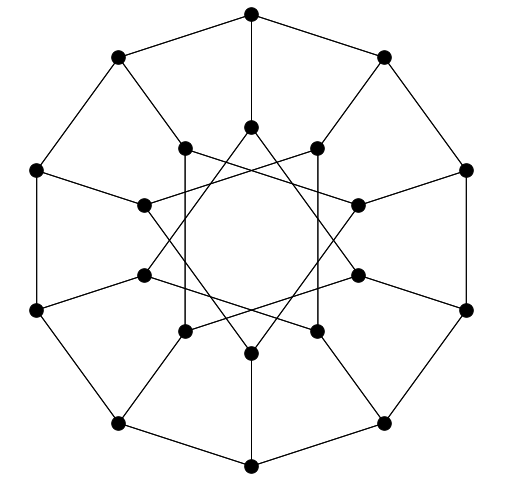}
\caption{The generalised Petersen graph $P(10,3)$.}
\label{fig:plot}
\end{figure}

It is known, as these are connected cubic graphs, that
$$\lambda_1(P(n,k)) = 3 > \lambda_2(P(n,k)).$$ 
Moreover, the entire spectrum of $P(n,k)$ has been given explicitly by Gera and St\v{a}nic\v{a} \cite{gerastanica}; they provide closed form trigonometric expressions for every eigenvalue of $P(n,k)$. This allows them to prove that
$$\lim_{n \rightarrow \infty} \lambda_2(P(n,2)) = 3,$$
that is, the sequence of spectral gaps of $P(n,2)$ tends to zero as $n \rightarrow \infty$. They assert that one could prove this for other fixed values of $k$, but that it would be a difficult problem in general. 

We solve their problem completely in this paper by proving that
$$\lim_{n \rightarrow \infty} \lambda_2(P(n,k)) = 3$$
independently of $k$. More specifically, we prove the following theorem.

\begin{thm} \label{main1}
Let $n \geq 4$ and suppose that $P(n,k)$ is a generalised Petersen graph with spectrum
$$3=\lambda_1 > \lambda_2 \geq \cdots \geq \lambda_{2n}.$$
Then we have the following explicit bound for the spectral gap:
$$\lambda_1 - \lambda_2 < \frac{4 \pi}{[n^{1/2}]-1}.$$

\end{thm}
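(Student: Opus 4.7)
The plan is to combine the explicit spectrum of $P(n,k)$ due to Gera and St\v{a}nic\v{a} (which expresses every eigenvalue in closed form) with Dirichlet's theorem on Diophantine approximation. In the form most convenient for what follows, the Gera--St\v{a}nic\v{a} calculation says that for each $j \in \{0,1,\ldots,n-1\}$, the graph $P(n,k)$ has two eigenvalues
$$\lambda_{j,\pm} = x_j + y_j \pm \sqrt{(x_j - y_j)^2 + 1},\qquad x_j = \cos\frac{2\pi j}{n},\ y_j = \cos\frac{2\pi j k}{n},$$
obtained by Fourier-diagonalising the $\ZZ/n\ZZ$-symmetry $i\mapsto i+1$ (which acts simultaneously on both rim and spokes) into $2\times 2$ blocks. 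Only $j=0$ produces the valency $3=\lambda_1$, so $\lambda_{j,+}\le\lambda_2$ for every $j\ne 0$, and the task reduces to exhibiting a single $j\ne 0$ with $\lambda_{j,+}$ provably close to $3$.

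Given this, I would estimate $3-\lambda_{j,+}$ directly. Since $\sqrt{(x_j-y_j)^2+1}\ge 1$,
$$3-\lambda_{j,+} = (1-x_j)+(1-y_j) - \left(\sqrt{(x_j-y_j)^2+1}-1\right) \le (1-x_j)+(1-y_j).$$
Applying the linear estimate $1-\cos\theta = 2\sin^2(\theta/2) \le |\theta|$ together with the $2\pi$-periodicity of cosine gives
$$3-\lambda_{j,+} \le \frac{2\pi j}{n} + 2\pi\left\|\frac{jk}{n}\right\|,$$
where $\|\cdot\|$ denotes distance to the nearest integer. Dirichlet's theorem applied to $\alpha = k/n$ with $N := [\sqrt n]-1$ (the hypothesis $n\ge 4$ enters precisely to guarantee $N\ge 1$) supplies an integer $j$ with $1\le j\le N$ and $\|jk/n\|\le 1/N$. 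Since $N^2 < n$, we have $2\pi j/n \le 2\pi N/n < 2\pi/N$, while $2\pi\|jk/n\|\le 2\pi/N$, so the two contributions sum to at most $4\pi/([\sqrt n]-1)$, which is exactly the required bound on $\lambda_1-\lambda_2$.

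The main obstacle is conceptual rather than computational: a naive Rayleigh-quotient argument using a test function supported on an arc of the outer cycle fails when $k$ is comparable to $n$, because the long edges $b_i b_{i+k}$ wreck any locally constant test function. Dirichlet's theorem is what finesses this: it picks $j$ so that \emph{both} frequencies $2\pi j/n$ and $2\pi jk/n$ are small modulo $2\pi$ simultaneously, and neither can be forced to be $O(1/\sqrt n)$ individually across all $k$ even though their sum can. I would also remark, in passing, that replacing the linear estimate $1-\cos\theta\le|\theta|$ by the sharper quadratic one $1-\cos\theta\le\theta^2/2$ would yield a stronger bound of order $1/n$; the linear version is used here to obtain the cleaner form $4\pi/([\sqrt n]-1)$ in the statement.
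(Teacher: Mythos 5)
Your proof is correct and follows essentially the same route as the paper: the Gera--St\v{a}nic\v{a} $2\times 2$-block spectrum, the observation that the square-root term is at least $1$ so that $3-\lambda_{j,+}\le(1-x_j)+(1-y_j)$, the linear bound $1-\cos\theta\le|\theta|$, and Dirichlet's theorem to produce a nonzero $j$ making both cosines close to $1$. The one genuine (though minor) difference is in how Dirichlet is invoked: the paper applies the \emph{simultaneous} two-dimensional version to the pair $(1/n,\,k/n)$ with $q=[n^{1/2}]-1$, obtaining $1\le j\le q^2<n$ with both fractional parts within $1/q$ of an integer, whereas you apply the one-dimensional version to $k/n$ alone, take $1\le j\le N=[n^{1/2}]-1$, and control the first frequency trivially via $2\pi j/n\le 2\pi N/n<2\pi/N$ (using $N^2<n$). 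Both variants yield the identical bound $4\pi/([n^{1/2}]-1)$; yours is marginally more elementary, but the simultaneous version is the one that extends to produce the $m$ distinct indices needed for Theorem \ref{main2}, which is presumably why the paper sets it up that way from the start. Your closing remark that the quadratic estimate $1-\cos\theta\le\theta^2/2$ would sharpen the gap to $O(1/n)$ is also correct and applies equally to the paper's argument.
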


It follows immediately from the above theorem that
$$\lambda_2(P(n,k)) = 3 + O(n^{-1/2})$$
as $n \rightarrow \infty$, where the implied constant is absolute. This solves the problem posed by Gera and St\v{a}nic\v{a}.

It turns out, however, that we can prove a much stronger type of result. Specifically, we can prove that a positive proportion of the eigenvalues of $P(n,k)$ cluster around the valency as $n$ tends to infinity. To make this notion more clear, we first consider the following theorem of Cioab\u{a} \cite{cioaba} regarding the eigenvalues of Cayley graphs.

\begin{thm}[Cioab\u{a}] \label{cioaba}
For each $\epsilon > 0$ and $k$, there exists a positive constant $C = C(\epsilon,k)$ such that for any Abelian group G and for any symmetric set $S$ of elements of $G$ with $|S| = k$ and $1 \notin S$, the number of eigenvalues $\lambda_i$ of the Cayley graph $X=X(G,S)$ such that $\lambda_i \geq k - \epsilon$ is at least $C \cdot |G|$.
\end{thm}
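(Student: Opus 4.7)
The plan is to combine the character-theoretic description of the spectrum of an Abelian Cayley graph with a pigeonhole argument on the dual group $\hat G$. Recall that the eigenvalues of $X(G,S)$ are precisely $\lambda_\chi = \sum_{s \in S} \chi(s)$ as $\chi$ ranges over $\hat G$, and they are real because $\chi(s^{-1}) = \overline{\chi(s)}$ together with $S = S^{-1}$ forces the sum to equal its own conjugate. Since $|\hat G| = |G|$, counting eigenvalues $\lambda_i \geq k - \epsilon$ is the same as counting characters $\chi$ for which $\lambda_\chi \geq k - \epsilon$.

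The first step is an analytic reduction. For $|z|=1$ one has $\operatorname{Re}(z) = 1 - |z-1|^2/2$, so if $|\chi(s) - 1| < \delta$ for every $s \in S$, then $\lambda_\chi = \sum_{s \in S} \operatorname{Re}(\chi(s)) > k - k\delta^2/2$. Choosing $\delta = \sqrt{2\epsilon/k}$ therefore reduces the theorem to producing at least $C \cdot |G|$ characters $\chi$ satisfying $|\chi(s)-1| < \delta$ for all $s \in S$.

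For the main step, consider the evaluation map $\Phi \colon \hat G \to \mathbb{T}^k$ given by $\chi \mapsto (\chi(s_1), \dots, \chi(s_k))$, where $S = \{s_1,\dots,s_k\}$. Partition $\mathbb{T}^k$ into $N = m^k$ coordinate-aligned boxes by dividing each unit circle into $m = \lceil 2\pi/\delta\rceil$ equal arcs, so that any two points in the same box agree to within $\delta$ in each coordinate. By pigeonhole some box contains at least $|G|/N$ characters. Fix any one such $\chi_0$ in that box; then for every other $\chi$ in the same box the quotient $\chi\chi_0^{-1}$ lies in $\hat G$ and satisfies
\[
|(\chi\chi_0^{-1})(s) - 1| \;=\; |\chi(s) - \chi_0(s)| \;<\; \delta
\]
for every $s \in S$. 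As $\chi$ varies, the quotients $\chi\chi_0^{-1}$ are pairwise distinct, so we obtain at least $|G|/N$ distinct characters whose corresponding eigenvalues are all $\geq k - \epsilon$. Setting $C(\epsilon,k) = m^{-k}$ finishes the proof. The argument is largely routine; the one subtlety worth noting is the passage from "many characters inside a common small box" to "many characters close to the trivial character," which depends on the fact that $\hat G$ is itself a group under pointwise multiplication. The resulting constant is of order $(\sqrt{\epsilon/k}/\pi)^k$, independent of $|G|$ as required, though it deteriorates quickly in $k$.
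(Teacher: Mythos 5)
The paper does not actually prove this statement: it is imported verbatim from Cioab\u{a}'s paper as background motivation, so there is no in-paper proof to compare yours against. Your argument is, however, correct and self-contained. The spectral description $\lambda_\chi = \sum_{s\in S}\chi(s)$ for abelian Cayley graphs, the identity $\operatorname{Re}(z) = 1 - |z-1|^2/2$ on the unit circle, the pigeonhole over $m^k$ boxes of $\mathbb{T}^k$ with $m = \lceil 2\pi/\delta\rceil$ and $\delta = \sqrt{2\epsilon/k}$, and the translation by $\chi_0^{-1}$ (which, as you rightly flag, is exactly where the group structure of $\hat{G}$ is needed) all check out; and counting characters is the same as counting eigenvalues with multiplicity because the spectrum is indexed by $\hat{G}$. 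Two small remarks. First, to get the strict inequality $|\chi(s)-\chi_0(s)|<\delta$ you should take the arcs half-open and note that the chordal distance between two points of a common arc of length $2\pi/m\leq\delta$ is at most the arc length; this is cosmetic, not a gap. Second, and more interestingly, your pigeonhole on the torus is precisely the mechanism behind the two Dirichlet-type approximation results that this paper proves and uses for $P(n,k)$: for $G = \mathbb{Z}/n\mathbb{Z}$ the characters are $j \mapsto e^{2\pi i ja/n}$, and your box argument specialises to simultaneous approximation of the numbers $a/n$ for $a\in S$, which is exactly the paper's application with $a_1 = 1/n$ and $a_2 = k/n$. So your proof of Cioab\u{a}'s theorem is the natural common generalisation of the method this paper applies to the generalised Petersen graphs, at the cost of a constant that degrades like $m^{-k}$ in the valency.
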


In short, Cioab\u{a}'s theorem states that a positive proportion of the eigenvalues of a Cayley graph tend towards the valency of the graph. It is known that some generalised Petersen graphs are, in fact, Cayley. Specifically, $P(n,k)$ is a Cayley graph if and only if $k^2 \equiv 1 \text{ mod } n$ (see Nedela and \v{S}kovieria \cite{nedela}). However, the above result is essentially not applicable here; we prove this through the following theorem.

\begin{thm} \label{cayley}
Let $A(N)$ count the number of generalised Petersen graphs $P(n,k)$ up to isomorphism that have $n \leq N$, and let $B(N)$ count those that are also Cayley graphs. Then 
$$\lim_{N \rightarrow \infty} \frac{B(N)}{A(N)} = 0,$$
that is, almost all generalised Petersen graphs are not Cayley graphs.
\end{thm}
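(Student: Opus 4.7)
The plan is to show that $A(N)$ grows quadratically in $N$ while $B(N)$ grows only like $N\log N$, forcing their ratio to zero. Because generalised Petersen graphs with different values of $n$ have different vertex counts, no two such graphs with different $n$ are isomorphic, so both quantities can be counted one value of $n$ at a time.

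For the lower bound on $A(N)$, I would invoke the known isomorphism classification of generalised Petersen graphs due to Steimle and Staton: when $\gcd(k,n)=1$, one has $P(n,k)\cong P(n,l)$ if and only if $l\equiv\pm k^{\pm 1}\pmod n$. Among $k\in[1,n/2]$ coprime to $n$ (of which there are $\varphi(n)/2$), each isomorphism class meets $[1,n/2]$ in at most two points, namely $k$ itself together with (if it lies in this range) the inverse of $k$ modulo $n$. Hence for each $n$ there are at least $\varphi(n)/4$ isomorphism classes, and summing gives
$$ A(N) \;\geq\; \frac{1}{4}\sum_{n=3}^{N}\varphi(n) \;\gg\; N^{2} $$
by the classical asymptotic $\sum_{n\leq N}\varphi(n)\sim 3N^{2}/\pi^{2}$.

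For the upper bound on $B(N)$, the Nedela--\v{S}koviera criterion already cited in the introduction shows that $B(N)$ is bounded above by the number of pairs $(n,k)$ with $n\leq N$, $1\leq k\leq n/2$, and $k^{2}\equiv 1\pmod n$. A routine application of the Chinese remainder theorem gives at most $2^{\omega(n)+1}$ solutions to this congruence in $[1,n]$, where $\omega(n)$ denotes the number of distinct prime factors of $n$. Combining this with the elementary estimate $\sum_{n\leq N}2^{\omega(n)}=O(N\log N)$, which follows at once from $2^{\omega(n)}\leq d(n)$ and Dirichlet's divisor theorem, yields $B(N)=O(N\log N)$. Consequently $B(N)/A(N)=O(\log N/N)\to 0$. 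The main obstacle is applying the isomorphism classification accurately; once that is granted, everything else reduces to standard counting and elementary analytic number theory.
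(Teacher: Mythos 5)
Your proposal is correct and follows essentially the same route as the paper: a lower bound $A(N)\gg N^2$ via the Staton--Steimle isomorphism classification (the paper quotes their exact count $(\varphi(m)+\kappa)/4$ rather than re-deriving the $\varphi(n)/4$ bound from the $l\equiv\pm k^{\pm1}$ criterion, but this is the same estimate from the same source), and an upper bound $B(N)\ll N\log N$ via the Nedela--\v{S}koviera criterion, the Chinese remainder theorem bound $O(2^{\omega(n)})$, and the standard estimate $\sum_{n\leq N}2^{\omega(n)}\ll N\log N$. No substantive differences to report.
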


We can think of the above theorem as follows. If one were to randomly choose a generalised Petersen graph $P(n,k)$ from all those with $n \leq N$, then the probability that this graph is also a Cayley graph becomes arbitrarily close to zero as $N$ tends to infinity. The above theorem demonstrates the need for an analogue of Cioab\u{a}'s theorem in the setting of generalised Petersen graphs. To this end, we prove the following theorem.

\begin{thm} \label{main2}
For each $\epsilon > 0$, there exists a positive constant $C = C(\epsilon)$ such that for any generalised Petersen graph $P(n,k)$, the number of eigenvalues $\lambda_i$ of $P(n,k)$ such that $\lambda_i \geq 3 - \epsilon$ is at least $C \cdot 2 n$.
\end{thm}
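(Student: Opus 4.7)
The plan is to combine the cyclic symmetry of $P(n,k)$ with a lattice-point counting argument based on Minkowski's second theorem.

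First, as in Gera and St\v{a}nic\v{a}'s analysis, the cyclic automorphism $\sigma: a_i \mapsto a_{i+1}, b_i \mapsto b_{i+1}$ commutes with the adjacency matrix $A$. Decomposing $\RR^{2n}$ into the two-dimensional eigenspaces of $\sigma$ yields, for each $j \in \{0, 1, \ldots, n-1\}$, a $2\times 2$ block
$$M_j = \begin{pmatrix} 2\cos(2\pi j/n) & 1 \\ 1 & 2\cos(2\pi jk/n) \end{pmatrix},$$
whose larger eigenvalue is
$$\lambda_j^+ = \cos x_j + \cos y_j + \sqrt{(\cos x_j - \cos y_j)^2 + 1},$$
with $x_j = 2\pi j/n$ and $y_j = 2\pi jk/n$. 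A short calculation shows that if $(1-\cos x_j) + (1-\cos y_j) \leq \epsilon$ then $\lambda_j^+ \geq 3-\epsilon$; using $1-\cos\theta = 2\sin^2(\theta/2)$ and $|\sin(\pi t)| \leq \pi\|t\|$ (distance to the nearest integer), this is implied by $\|j/n\|^2 + \|jk/n\|^2 \leq \epsilon/(2\pi^2)$. So it suffices to count the $j$'s for which the point $(j/n, jk/n) \in (\RR/\ZZ)^2$ lies within torus-distance $r := \sqrt{\epsilon/(2\pi^2)}$ of the origin.

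The key step is to interpret this as a lattice point count. Set $L := \ZZ(1/n, k/n) + \ZZ^2 \subset \RR^2$, a lattice of covolume $1/n$. For $r<1/2$ each desired $j$ corresponds to exactly one point of $L$ in the Euclidean ball $B_r$ about the origin, so I want a lower bound on $|L \cap B_r|$. Let $\ell_1 \leq \ell_2$ be the successive minima; in dimension two they form a basis, and by Minkowski's second theorem $1/n \leq \ell_1 \ell_2 \leq 4/(\pi n)$, so $\ell_1 \leq 2/\sqrt{\pi n}$. For $n$ exceeding a threshold $n_0(\epsilon)$ of order $1/\epsilon$, one has $\ell_1 < r$, and a brief case analysis yields $|L \cap B_r| \geq \pi n r^2/4$: if $\ell_2 \leq r$, then the parallelogram of lattice multiples $a v_1 + b v_2$ with $|a| \leq r/(2\ell_1),\ |b| \leq r/(2\ell_2)$ fits inside $B_r$ and contains at least $r^2/(\ell_1 \ell_2) \geq \pi n r^2/4$ points; if $\ell_1 \leq r < \ell_2$, the multiples of $v_1$ alone give at least $2r/\ell_1 \geq \pi n r^2/2$ points, using $\ell_1 \leq 4/(\pi n \ell_2)$ together with $\ell_2 > r$. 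This produces at least $(\epsilon/(8\pi))\, n$ eigenvalues with $\lambda_j^+ \geq 3-\epsilon$. The finitely many $n \leq n_0(\epsilon)$ are handled by the trivial eigenvalue $\lambda_1 = 3$ after shrinking the constant $C(\epsilon)$ appropriately.

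The step I expect to be the main obstacle is securing this lattice lower bound uniformly in $k$: a priori $L$ could be very anisotropic, but Minkowski's second theorem constrains $\ell_1 \ell_2$ to be of order $1/n$, which is precisely what is needed for $|L \cap B_r|$ to have order $n r^2$ regardless of the shape of $L$. The whole argument ultimately yields $C(\epsilon)$ of order $\epsilon$.
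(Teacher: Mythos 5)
Your proposal is correct, and although it reduces to the same Diophantine condition as the paper --- count the indices $j \in \{0,\dots,n-1\}$ for which both $j/n$ and $jk/n$ are close to integers, then feed these into the Gera--St\v{a}nic\v{a} eigenvalue formula and use that the square-root term is at least $1$ --- the counting step is genuinely different. The paper proves a multi-point extension of Dirichlet's theorem (its Theorem \ref{dirichlet2}) by pigeonholing $mq^N+1$ points into $q^N$ sub-cubes and taking differences; with $q = [4\pi/\epsilon]+1$ and the linear bound $\cos x > 1-|x|$ this yields $m = [n/q^2] \asymp \epsilon^2 n$ good indices. You instead view the orbit $\{(j/n, jk/n)\}$ as the lattice $L = \ZZ(1/n,k/n)+\ZZ^2$ of covolume $1/n$ and lower-bound $|L \cap B_r|$ via Minkowski's second theorem, whose bound $\ell_1\ell_2 \asymp 1/n$ is exactly the uniformity in $k$ one needs; combined with the sharper estimate $1-\cos\theta \leq \theta^2/2$ this lets you take $r \asymp \sqrt{\epsilon}$ and obtain $C(\epsilon) \asymp \epsilon$ rather than the paper's $\asymp \epsilon^2$, a genuine quantitative improvement. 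The trade-off is that the paper's route is more elementary and self-contained (pure pigeonhole, reusing the same lemma that drives Theorem \ref{main1}), while yours needs Minkowski's second theorem and a case analysis on $\ell_2$ versus $r$. Two small blemishes in your write-up, neither of which affects the result: the number of integers in $[-x,x]$ is $2[x]+1 \geq x$ but not $\geq 2x$, so your box and segment counts are each off by a bounded factor (only the constant changes); and you should note that one may assume $\epsilon$ small enough that $r < 1/2$, since the conclusion for large $\epsilon$ follows from that for small $\epsilon$.
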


That is, given any generalised Petersen graph, a positive proportion of its eigenvalues tend towards the valency. 

We prove Theorem \ref{main1} and Theorem \ref{main2} by employing Dirichlet's theorem for Diophantine approximation; this is an elementary result in the theory of numbers. We prove Theorem \ref{cayley} by using some standard estimates from analytic number theory. Finally, in Section \ref{expansion}, we interpret our main results in the context of graph expansion.

\section{Proof of Results}

\subsection{Dirichlet's theorem}

To prove Theorem \ref{main1}, we require the following version of Dirichlet's theorem.

\begin{thm}\label{dirichlet}
Given $N$ real numbers $a_1, a_2, \ldots, a_N$, a positive integer $q$, and a positive integer $t_0$, we can find an integer $t$ in the range
$$t_0 \leq t \leq t_0 q^N,$$
and integers $x_1, x_2, \ldots, x_N,$ such that
$$|t a_n - x_n| \leq 1/q$$
for all $1 \leq n \leq N$.
\end{thm}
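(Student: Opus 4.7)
The plan is to prove Theorem \ref{dirichlet} by the classical pigeonhole argument underlying Dirichlet's simultaneous approximation theorem, with a single minor twist that places $t$ in $[t_0, t_0 q^N]$ rather than the usual interval $[1, q^N]$.

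First I would list the $q^N + 1$ integer multiples $t_0,\ 2t_0,\ 3t_0,\ \ldots,\ (q^N+1)t_0$, and to each such multiple $m t_0$ I would associate the vector of fractional parts
$$\bigl( \{m t_0 a_1 \},\ \{m t_0 a_2 \},\ \ldots,\ \{m t_0 a_N\} \bigr) \in [0,1)^N.$$
Next I would partition $[0,1)^N$ into $q^N$ axis-parallel subcubes of side length $1/q$; since $q^N + 1 > q^N$, the pigeonhole principle produces two indices $m_1 < m_2$ whose associated vectors share a common subcube. Setting $t = (m_2 - m_1) t_0$ and $x_n = \lfloor m_2 t_0 a_n \rfloor - \lfloor m_1 t_0 a_n \rfloor$, the subcube condition gives $|\{m_2 t_0 a_n\} - \{m_1 t_0 a_n\}| \leq 1/q$ coordinatewise, which rearranges to $|t a_n - x_n| \leq 1/q$. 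The inequalities $1 \leq m_2 - m_1 \leq q^N$ then immediately translate to $t_0 \leq t \leq t_0 q^N$.

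Equivalently, and perhaps more cleanly, one could apply the textbook version of Dirichlet's theorem to the rescaled sequence $b_n = t_0 a_n$ to obtain some $t' \in [1, q^N]$ with $|t' b_n - x_n| \leq 1/q$, and then define $t = t_0 t'$. Either way, there is no genuine obstacle here: the only point requiring any attention is the choice to step in increments of $t_0$ rather than $1$, which is precisely what enforces both the lower bound $t \geq t_0$ and the scaled upper bound $t \leq t_0 q^N$.
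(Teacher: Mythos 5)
Your proposal is correct and is essentially the same argument as the paper's: both partition the unit cube into $q^N$ subcubes, apply the pigeonhole principle to $q^N+1$ points whose parameters step in increments of $t_0$, and take $t$ to be the difference of two parameters landing in a common subcube. Your version merely makes the choice of $x_n$ and the fractional-part bookkeeping explicit, which the paper leaves implicit.
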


This is a classic result stating that one can always scale some given set of real numbers by an integer so as to make their fractional parts as small as desired. The tighter the bound imposed on the size of the fractional parts, the larger the range required to guarantee the existence of a scale factor. This is similar to that given in Titchmarsh's text \cite[Ch 8.1]{titchmarsh}, but we have observed that if $t_0$ is an integer, then $t$ will be an integer. We give the proof for completeness, as it is short and relies only on an application of the pigeonhole principle.

\begin{proof}
We consider the $N$-dimensional unit cube with a vertex at the origin and edges along the positive coordinate axes. We divide this cube into $q^N$ equal sub-cubes by partitioning each edge into $q$ equal lengths. Now consider the set of $q^N + 1$ points of the form
$$(ua_1, ua_2, \ldots, ua_N)$$
where $u = 0, t_0, 2 t_0, \ldots, q^N t_0$. By considering the coordinates of each point by their image modulo 1, we have that all $q^N +1$ points lie in the unit cube.  Therefore, by the pigeonhole principle, it follows that at least two of these points are in the same sub-cube. If these two points correspond to $u=a$ and $u =b$ (with $a<b$), then $t = b-a$ satisfies the theorem. Clearly, if $t_0$ is an integer, then $t$ will be an integer.
\end{proof}

To prove Theorem \ref{main2}, we require the following extension of Theorem \ref{dirichlet}.

\begin{thm} \label{dirichlet2}
Given $N$ real numbers $a_1, a_2, \ldots, a_N$, positive integers $m$ and $q$, and a positive integer $t_0$, we can find distinct integers $t_1, t_2, \ldots, t_m$, each of which is in the range
$$t_0 \leq t_i \leq m t_0 q^N,$$
and a set of integers $\{x_{i,n}\}$ such that
$$|t_i a_n - x_{i,n}| \leq 1/q$$
for all $1 \leq n \leq N$ and $1 \leq i \leq m$. 
\end{thm}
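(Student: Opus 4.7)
The plan is to re-run the pigeonhole argument from Theorem \ref{dirichlet}, but with enough points that some sub-cube must contain not just two but at least $m+1$ of them. The differences between these coincidences will then furnish the required $t_1,\ldots,t_m$.

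Concretely, I would subdivide the $N$-dimensional unit cube into $q^N$ sub-cubes of side $1/q$ exactly as before, and consider the $mq^N+1$ points
$$(ua_1, ua_2, \ldots, ua_N), \qquad u = 0,\, t_0,\, 2t_0,\, \ldots,\, mt_0 q^N,$$
with each coordinate reduced modulo $1$. Since $mq^N+1$ points are distributed among $q^N$ sub-cubes, the pigeonhole principle guarantees some sub-cube $S$ contains at least $\lceil (mq^N+1)/q^N \rceil = m+1$ of them. Let the corresponding $u$-values, in increasing order, be $u_0 < u_1 < \cdots < u_m$, and define $t_i := u_i - u_0$ for $1 \leq i \leq m$.

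The $t_i$ are then distinct positive integers. Because consecutive admissible values of $u$ differ by $t_0$, we have $t_i \geq t_0$ for each $i$; and since $u_m \leq mt_0 q^N$ and $u_0 \geq 0$, we also have $t_i \leq m t_0 q^N$, giving the range required by the theorem. Finally, since the points indexed by $u_0$ and $u_i$ lie in the same sub-cube $S$, the fractional parts of $u_0 a_n$ and $u_i a_n$ differ by at most $1/q$ in each coordinate, so there exist integers $x_{i,n}$ with
$$|t_i a_n - x_{i,n}| = |(u_i - u_0)a_n - x_{i,n}| \leq 1/q$$
for all $n$ and $i$, as desired.

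There is no serious obstacle here; the argument is essentially a bookkeeping strengthening of the single-collision pigeonhole in Theorem \ref{dirichlet}. The only point requiring minor care is checking that the $m+1$ coincident $u$-values yield $m$ genuinely distinct $t_i$'s that simultaneously respect the lower bound $t_0$ and the upper bound $mt_0 q^N$; both follow automatically from the arithmetic progression structure of the chosen $u$-values. Everything else is a direct transcription of the proof of Theorem \ref{dirichlet}.
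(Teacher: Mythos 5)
Your proposal is correct and follows essentially the same route as the paper: the paper likewise takes the $mq^N+1$ points indexed by $u = 0, t_0, \ldots, mt_0q^N$, finds a sub-cube containing at least $m+1$ of them by pigeonhole, and sets $t_i$ equal to the differences from the smallest coincident $u$-value. Your additional checks on distinctness and the bounds $t_0 \leq t_i \leq mt_0q^N$ are accurate and only make explicit what the paper leaves implicit.
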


\begin{proof}
This is similar to the proof of Theorem \ref{dirichlet}, except that here we give $u$ the values $0, t_0, 2 t_0, \ldots, mq^N t_0$. This gives us $m q^N + 1$ points, and so there must exist a sub-cube containing at least $m+1$ of these points. If we let these points correspond to the values $u  = u_1, u_2, \ldots, u_{m+1}$, then choosing $t_{i} = u_{i+1} - u_1$ proves the theorem. 
\end{proof}

\subsection{Proof of Theorem \ref{main1}}

In the remainder of this paper, we will let $n$ and $k$ be positive integers such that $n \geq 4$ and $1 \leq k \leq n/2$. Corollary 2.5 of Gera and St\v{a}nic\v{a} \cite{gerastanica} provides us with the spectrum of $P(n,k)$; we state their result here.

\begin{thm}
The eigenvalues of $P(n,k)$ are given by
\begin{equation} \label{spectrum}
\cos\bigg( \frac{2 \pi j}{n} \bigg) + \cos \bigg( \frac{2 \pi j k}{n} \bigg) \pm \sqrt{ \bigg( \cos\bigg( \frac{2 \pi j}{n} \bigg) - \cos \bigg( \frac{2 \pi j k}{n} \bigg) \bigg)^2 + 1}
\end{equation}
for $0 \leq j \leq n-1$.
\end{thm}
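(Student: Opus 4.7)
The plan is to exploit the rotational symmetry of $P(n,k)$. The permutation $a_i \mapsto a_{i+1}$, $b_i \mapsto b_{i+1}$ (subscripts modulo $n$) preserves each of the three edge classes, so it is an automorphism. With vertices ordered $a_0, \ldots, a_{n-1}, b_0, \ldots, b_{n-1}$, the adjacency matrix of $P(n,k)$ accordingly takes the block form
$$A = \begin{pmatrix} C_1 & I \\ I & C_k \end{pmatrix},$$
in which $C_1$ is the $n \times n$ circulant with ones at offsets $\pm 1$ (encoding the outer $n$-cycle) and $C_k$ is the circulant with ones at offsets $\pm k$ (encoding the inner edges $b_i b_{i+k}$).

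First I would introduce the discrete Fourier basis $v_j = (1, \omega^j, \omega^{2j}, \ldots, \omega^{(n-1)j})^T$ with $\omega = e^{2\pi i / n}$, which simultaneously diagonalises every circulant. A direct calculation gives $C_1 v_j = 2\cos(2\pi j / n)\, v_j$ and $C_k v_j = 2\cos(2\pi j k / n)\, v_j$. For each $j \in \{0, 1, \ldots, n-1\}$ the two-dimensional subspace $W_j = \operatorname{span}\{ (v_j, \mathbf{0})^T,\, (\mathbf{0}, v_j)^T\}$ is $A$-invariant, and the $W_j$ are mutually orthogonal and satisfy $\mathbb{C}^{2n} = \bigoplus_{j=0}^{n-1} W_j$. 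Consequently the full spectrum of $A$ is obtained by diagonalising each block $A|_{W_j}$ in turn.

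In the stated basis the restriction $A|_{W_j}$ is represented by the $2\times 2$ matrix
$$M_j = \begin{pmatrix} 2\cos(2\pi j/n) & 1 \\ 1 & 2\cos(2\pi jk/n) \end{pmatrix},$$
and I would invoke the quadratic formula on its characteristic polynomial. Writing $\mu = \cos(2\pi j/n)$ and $\nu = \cos(2\pi jk/n)$, the two eigenvalues of $M_j$ come out to $\mu + \nu \pm \sqrt{(\mu - \nu)^2 + 1}$, which is exactly formula (\ref{spectrum}); as $j$ ranges over $\{0, \ldots, n-1\}$ one recovers all $2n$ eigenvalues of $A$. The main obstacle I anticipate is establishing the Fourier decomposition of $\mathbb{C}^{2n}$ into the two-dimensional $A$-invariant blocks $W_j$; this rests on $C_1$ and $C_k$ sharing the Fourier eigenbasis, a fact ultimately encoded by the cyclic automorphism noted at the outset. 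A secondary wrinkle is that this basis is complex while $A$ is real symmetric, but each $M_j$ is itself real with real eigenvalues, and pairing modes $j \leftrightarrow n-j$ yields conjugate eigenvectors whose real and imaginary parts furnish a real basis, so no additional difficulty arises.
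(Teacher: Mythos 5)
Your argument is correct and is essentially the standard proof of this result: the paper itself does not prove the theorem but imports it as Corollary 2.5 of Gera and St\v{a}nic\v{a}, whose derivation is precisely this circulant block decomposition, Fourier diagonalisation of $C_1$ and $C_k$, and the quadratic formula applied to each $2\times 2$ block $M_j$. The only point worth flagging is the degenerate case $k=n/2$ (for even $n$), where the offsets $+k$ and $-k$ coincide and $C_k$ contributes $\cos(2\pi jk/n)$ rather than $2\cos(2\pi jk/n)$, so the stated formula (and hence your proof) requires $k<n/2$; this caveat is inherited from the statement as quoted and is not a defect of your approach.
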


Clearly, $j=0$ corresponds to two eigenvalues, one of these being $\lambda_1 = 3$. We wish to show that there exists some integer $j$ satisfying $1 \leq j \leq n-1$ such that (\ref{spectrum}) is close to $3$ (where we are using the positive square root). This will ensure that the spectral gap is small.

Using Theorem \ref{dirichlet}, we can show that such an integer $j$ exists. By the periodic nature of $\cos(2 \pi \theta)$, we wish to choose an integer $j$ so that $j/n$ and $jk/n$ are both close to integers. 

As such, we apply Theorem \ref{dirichlet} with  $a_1 = 1/n, a_2 = k/n$, $q = [n^{1/2}]-1$ and $t_0=1$. It follows immediately that there exists an integer $j$ in the range
$$1 \leq j < n,$$
and integers $x_1$ and $x_2$ such that the numbers
$$|j/n - x_1|, |jk/n-x_2|$$
do not exceed $1/([n^{1/2}]-1)$. We now examine what happens to the terms in (\ref{spectrum}) with this choice. Letting $\theta$ denote a real number satisfying $|\theta| \leq 1$, we have that
\begin{eqnarray*}
\cos\bigg( \frac{2 \pi j}{n} \bigg) & = & \cos\bigg( \frac{2 \pi \theta}{[n^{1/2}]-1} \bigg) \\
& > & 1 - \frac{2 \pi}{[n^{1/2}]-1} .
\end{eqnarray*}
The last line of working follows from the fact that $\cos(x) > 1-|x|$. Similiarly, we have
\begin{eqnarray*}
\cos\bigg( \frac{2 \pi j k}{n} \bigg) & > & 1 -  \frac{2 \pi}{[n^{1/2}]-1}.
\end{eqnarray*}

Therefore, using (\ref{spectrum}) and noting that the square-root term is at least 1, it follows that $P(n,k)$ has an eigenvalue $\lambda \neq 3$ such that
$$\lambda > 3 - \frac{4 \pi}{[n^{1/2}]-1} .$$
Clearly, this forms a lower bound for the second largest eigenvalue and so concludes the proof of Theorem \ref{main1}.

\subsection{Proof of Theorem \ref{cayley}}

For this subsection, we employ the notation $f(N) \ll g(N)$ to mean that there exists some $c>0$ such that $f(N) < c g(N)$ for all sufficiently large values of $N$. 

Let $N \geq 3$ be an integer and let $A(N)$ count the number of generalised Petersen graphs $P(n,k)$ up to isomorphism that have $n \leq N$. We want to come up with a lower bound for $A(N)$. For every $n$, we know that $1 \leq k \leq n/2$, and so there are approximately $N^2/4$ choices of pairs $(n,k)$. However, to take isomorphism classes into account, we require the following lemma of Staton and Steimle \cite{statonsteimle}.

\begin{lem}
If $m \geq 5$, then there are exactly $(\varphi(m)+\kappa)/4$ isomorphism classes of $P(m,k)$ where
\begin{enumerate}
\item $k$ is relatively prime to $m$.
\item $\varphi(m)$ is the Euler totient function and denotes the number of integers $n$ such that $1 \leq n \leq m$ and $n$ is relatively prime to $m$.
\item $\kappa$ is the number of solutions to $x^2 \equiv \pm 1 \text{ mod } m$.
\end{enumerate}
\end{lem}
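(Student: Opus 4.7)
The plan is to reduce the count of isomorphism classes to a counting-orbits problem and apply Burnside's lemma. The starting point is the classical isomorphism theorem for generalised Petersen graphs due to Frucht, Graver, and Watkins: for $\gcd(k,m)=\gcd(k',m)=1$, one has $P(m,k)\cong P(m,k')$ if and only if $k'\equiv\pm k^{\pm 1}\pmod{m}$. I would take this result as an input; its proof requires a careful analysis of the automorphism group of $P(m,k)$ and of its ``inner'' and ``outer'' cycles, and is the main obstacle one would have to face to give a fully self-contained argument. With this theorem in hand, the rest is essentially bookkeeping.

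Granting the isomorphism theorem, the isomorphism classes of $P(m,k)$ with $\gcd(k,m)=1$ correspond bijectively to the orbits of the group $H=\{1,-1,\iota,-\iota\}$ (where $\iota$ denotes multiplicative inversion modulo $m$) acting on $G=(\mathbb{Z}/m\mathbb{Z})^{*}$ via $k\mapsto\pm k^{\pm 1}$. One checks directly that these four maps are closed under composition and form (for $m\geq 5$) a Klein four-group. Since $m/2$ is never a unit when $m\geq 3$ and since $-1$ swaps $k$ with $m-k$, each orbit contains exactly one representative in the range $1\leq k\leq m/2$, so the orbits of $H$ on $G$ are in bijection with the isomorphism classes parameterised as in the lemma.

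To finish I would invoke Burnside's lemma, according to which the number of orbits equals
\[
\frac{1}{|H|}\sum_{h\in H}|\mathrm{Fix}(h)| \;=\; \frac{1}{4}\bigl(|\mathrm{Fix}(1)|+|\mathrm{Fix}(-1)|+|\mathrm{Fix}(\iota)|+|\mathrm{Fix}(-\iota)|\bigr).
\]
The identity contributes $\varphi(m)$. The map $-1$ fixes a unit $k$ iff $2k\equiv 0\pmod m$, which has no solution in $G$ for $m\geq 3$, so $|\mathrm{Fix}(-1)|=0$. The map $\iota$ fixes the solutions to $k^{2}\equiv 1\pmod m$, while $-\iota$ fixes the solutions to $k^{2}\equiv -1\pmod m$; together these amount to the solutions of $x^{2}\equiv\pm 1\pmod m$, totalling $\kappa$. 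Summing the four contributions gives $(\varphi(m)+\kappa)/4$, as claimed.
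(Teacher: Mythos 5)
The paper does not actually prove this lemma: it is quoted verbatim from Staton and Steimle, so there is no in-paper argument to compare yours against. Your Burnside computation is the standard way to derive the formula and is essentially correct, and you are right to isolate the isomorphism theorem ($P(m,k)\cong P(m,k')$ iff $k'\equiv\pm k^{\pm1}\pmod m$, for parameters coprime to $m$) as the genuine content; everything after that is indeed bookkeeping, and your fixed-point counts check out: $\mathrm{Fix}(-1)$ is empty for $m\geq 3$ since $2k\equiv 0\pmod m$ forces $m\mid 2$ when $k$ is a unit, and the fixed sets of $\iota$ and $-\iota$ are disjoint for $m\geq 3$ (a common element would force $1\equiv -1\pmod m$), so together they contribute exactly $\kappa$.

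Two small corrections. First, your claim that each orbit contains exactly one representative with $1\leq k\leq m/2$ is false in general: for $m=15$ the orbit of $2$ is $\{2,7,8,13\}$, which meets that range in both $2$ and $7$ (so $P(15,2)\cong P(15,7)$). Fortunately nothing in your argument depends on this; the isomorphism classes are the orbits regardless of how many representatives land in the fundamental range, and every orbit does meet the range because exactly one of $k$ and $m-k$ lies in it. Second, the hypothesis $m\geq 5$ is precisely what licenses the black-boxed isomorphism theorem (the small cases carry exceptional isomorphisms and automorphisms), so your proof inherits that restriction from the input result rather than from the counting itself; it would be worth saying so explicitly.
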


We need a lower bound for $A(N)$, and we can do this by only counting those pairs $(n,k)$ with $\gcd(n,k)=1$. Thus, we have by the above lemma that
$$A(N) > \sum_{n \leq N} \frac{\varphi(n)}{4} \gg N^2$$
as $N \rightarrow \infty$. 

Now, we let $B(N)$ count those generalised Petersen graphs $P(n,k)$ with $n\leq N$, $k \leq n/2$ where $P(n,k)$ is a Cayley graph. We will bound $B(N)$ crudely from above. First, we need the result of Nedela and \v{S}koviera \cite{nedela} that $P(n,k)$ is a Cayley graph if and only if $k^2 \equiv 1 \text{ mod } n$. Therefore, we have that
$$B(N) < \sum_{n \leq N} \sum_{ \substack{k \leq n \\ k^2 \equiv 1 \text{ mod } n}}1$$
where we have not included isomorphism classes or the tighter constraint of $k \leq n/2$ as we only need an upper bound. It follows by the Chinese Remainder Theorem that
$$\sum_{ \substack{k \leq n \\ k^2 \equiv 1 \text{ mod } n}}1 \ll 2^{\omega(n)}$$
where $\omega(n)$ denotes the number of distinct prime divisors of $n$. It is a well known estimate in analytic number theory (though one can see Exercise 4.4.18 of Murty \cite{murty}  for example) that
$$B(N) \ll \sum_{n \leq N} 2^{\omega(n)} \ll N \log N.$$
Collecting our bounds for $A(N)$ and $B(N)$, we have that
$$\frac{B(N)}{A(N)} \ll \frac{\log N}{N}$$
for sufficiently large $N$, and so Theorem \ref{cayley} follows immediately.

\subsection{Proof of Theorem \ref{main2}}

To prove our result, we let $\epsilon > 0$ and apply Theorem \ref{dirichlet2} with $a_1 = 1/n$, $a_2 = k/n$, $q= [4 \pi / \epsilon] +1$, $m = [n/q^2]$ and $t_0 = 1$. To ensure that this all works, we will consider $n$ to be sufficiently large so that $n> q^2$. It follows immediately that there exists $m$ distinct integers $j_1, j_2, \ldots, j_m$ each satisfying 
$$1 \leq j_i < n$$
and integers $x_{i,1},\ldots, x_{m,1}, x_{i,2}, \ldots, x_{m,2}$ such that the numbers
$$|j_i/n - x_{i,1}|, |j_i k/n - x_{i,2}|$$
are all less than $\epsilon/4 \pi$. Therefore, substituting any of these $j_i$ into $(\ref{spectrum})$ and working as in the proof of Theorem \ref{main1} we get an eigenvalue $\lambda$ which satisfies $\lambda> 3 - \epsilon.$ Moreover, as $m = [n/q^2]$, there exists a constant $C=C(\epsilon)$ such that at least $C \cdot 2 n$ eigenvalues of $P(n,k)$ satisfy this bound. This completes the proof.

\subsection{The expanding constant of $P(n,k)$} \label{expansion}

In applications, it can be useful to consider a graph as representing a computer network, where a piece of information is injected into some subset of the vertices and then proceeds to propagate along the edges of the graph at a fixed speed. For such a network to be efficient, we must make the demand that the information spreads quickly throughout the graph no matter which subset of vertices initially contains the information. Thus, one may measure the ability of a graph $X$ to act as a network by its so-called \textit{expanding constant}
$$h(X) = \min_{0 < |F| \leq \frac{|V|}{2}}  \frac{|\partial F|}{|F|}.$$
Here, $F$ ranges over the subsets of the vertices $V$ and $\partial F$ denotes the set of edges which connect a vertex in $F$ to a vertex which is not in $F$. 

One looks to use graphs with large expanding constants in the theory of networks. Importantly though, and possibly for financial reasons, we usually require that the graphs we use are sparse, that is, there are few edges relative to the number of possible edges. As such we consider $d$-regular graphs where each vertex is the endpoint of exactly $d$ edges. 

The isoperimetric inequality for $d$-regular graphs (due to Alon and Milman \cite{alonmilman} and to Dodziuk \cite{dodziuk}) demonstrates the relationship between the expanding constant $h(X)$ and the spectral gap $d-\lambda_2(X)$. We state this here.

\begin{thm}
Let $X$ be a finite, connected, $d$-regular graph without loops. Then

\begin{equation}
\frac{d-\lambda_2(X)}{2} \leq h(X) \leq \sqrt{2d(d-\lambda_2(X))}.
\end{equation}
\end{thm}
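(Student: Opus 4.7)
The plan is to pass to the graph Laplacian $L = dI - A$, whose second-smallest eigenvalue is $\mu_2 := d - \lambda_2(X)$, and to exploit the Rayleigh--quotient characterisation
$$\mu_2 = \min_{\substack{f \perp \mathbf{1} \\ f \neq 0}} \frac{\sum_{uv \in E}(f(u)-f(v))^2}{\sum_{v \in V} f(v)^2}.$$
For the lower bound $\tfrac{d-\lambda_2}{2} \leq h(X)$, I would fix any $F \subseteq V$ with $0 < |F| \leq |V|/2$ and test the quotient against the mean-zero function $f = \mathbf{1}_F - \tfrac{|F|}{|V|}\mathbf{1}$. A one-line computation gives numerator $|\partial F|$ and denominator $|F|(|V|-|F|)/|V|$, and the bound $|V|-|F| \geq |V|/2$ then yields $\mu_2 \leq 2|\partial F|/|F|$. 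Minimising over $F$ gives the left-hand inequality.

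For the harder Cheeger-type upper bound $h(X) \leq \sqrt{2d(d-\lambda_2)}$, I would take an eigenfunction $f$ of $L$ for $\mu_2$, orthogonal to $\mathbf{1}$, and after replacing $f$ by $-f$ if necessary assume that $V^+ := \{v : f(v) > 0\}$ satisfies $|V^+| \leq |V|/2$. Setting $g := f \cdot \mathbf{1}_{V^+}$, a short truncation argument---comparing $(Lg)(u)$ with $(Lf)(u) = \mu_2 f(u)$ on $V^+$ and noting that dropping the non-positive contributions from neighbours in $V \setminus V^+$ only decreases $(Lg)(u)$---gives $\langle Lg, g\rangle \leq \mu_2 \|g\|_2^2$. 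The crux of the proof is then a two-sided estimate of the auxiliary sum $S := \sum_{uv \in E}|g(u)^2 - g(v)^2|$. Factoring $g(u)^2 - g(v)^2 = (g(u)-g(v))(g(u)+g(v))$, Cauchy--Schwarz together with $(g(u)+g(v))^2 \leq 2(g(u)^2 + g(v)^2)$ delivers the upper estimate $S \leq \|g\|_2^2 \sqrt{2d\mu_2}$. On the other hand, the coarea identity
$$S = \int_0^\infty |\partial\{g^2 > t\}|\, dt,$$
combined with $\{g^2 > t\} \subseteq V^+$ (so $|\{g^2 > t\}| \leq |V|/2$ and $|\partial\{g^2 > t\}| \geq h(X)\,|\{g^2 > t\}|$), delivers the lower estimate $S \geq h(X)\|g\|_2^2$. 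Comparing the two bounds yields $h(X) \leq \sqrt{2d(d-\lambda_2)}$.

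The lower bound is essentially a mechanical insertion of an indicator into the Rayleigh quotient. The real work is in the Cheeger direction, where the main obstacle is the choice of the auxiliary sum $S$: one has to see that multiplying by $g(u)+g(v)$ before applying Cauchy--Schwarz cleanly splits $S$ into a Dirichlet form and an $\ell^2$ mass, producing the sharp factor $\sqrt{2d}$, and the truncation step must be set up so that every level set of $g^2$ sits inside $V^+$ and therefore has edge boundary at least $h(X)$ times its own size. Once these two ideas are identified, the coarea identity and the remaining combinatorics are routine.
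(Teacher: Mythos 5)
The paper does not prove this theorem at all: it is quoted verbatim from the literature, with the two halves attributed to Alon--Milman and to Dodziuk, so there is no in-paper argument to compare against. Your proposal is the standard proof of the discrete Cheeger inequality and, as outlined, it is correct on both sides. The lower bound via the test function $\mathbf{1}_F - \tfrac{|F|}{|V|}\mathbf{1}$ is exactly right (numerator $|\partial F|$, denominator $|F|(|V|-|F|)/|V| \geq |F|/2$). In the Cheeger direction, the truncation inequality $\langle Lg,g\rangle \leq \mu_2\|g\|_2^2$, the Cauchy--Schwarz splitting of $S$ using $\sum_{uv\in E}\bigl(g(u)^2+g(v)^2\bigr)=d\|g\|_2^2$, and the coarea identity all fit together as you describe. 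Two small points deserve an explicit line in a full write-up: since $f\perp\mathbf{1}$ and $f\neq 0$, both $\{f>0\}$ and $\{f<0\}$ are nonempty, so after the sign flip $V^+$ is a nonempty set of size at most $|V|/2$ and $\|g\|_2^2>0$, which is what lets you divide out $\|g\|_2^2$ at the end; and in the coarea step the inequality $|\partial\{g^2>t\}|\geq h(X)\,|\{g^2>t\}|$ should be noted to hold trivially (both sides zero) once $t$ exceeds $\max g^2$ and the level set is empty. Neither is a gap, just bookkeeping.
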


The rightmost part of this inequality can be combined with Theorem \ref{main1} to furnish the following result for the expanding constant of the generalised Petersen graphs.

\begin{cor}
Let $n\geq 4$ and $k \leq n/2$. Then
$$h(P(n,k)) < \sqrt{ \frac{24 \pi}{[n^{1/2}]-1}}.$$
\end{cor}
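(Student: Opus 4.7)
The plan is to chain together two results already at hand: the right-hand side of the isoperimetric inequality stated just above the corollary, and the explicit spectral gap bound from Theorem \ref{main1}. Since $P(n,k)$ is a finite, connected, cubic graph without loops (as noted in the introduction), the hypotheses of the isoperimetric inequality are satisfied with $d=3$.

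First I would instantiate the upper bound in the isoperimetric inequality with $d=3$, which gives
$$h(P(n,k)) \leq \sqrt{2 \cdot 3 \cdot (3 - \lambda_2(P(n,k)))} = \sqrt{6\,(3 - \lambda_2(P(n,k)))}.$$
Next I would invoke Theorem \ref{main1} to replace $3 - \lambda_2(P(n,k))$ by the explicit upper bound $4\pi/([n^{1/2}]-1)$. Since the square root is monotone increasing, substituting yields
$$h(P(n,k)) < \sqrt{6 \cdot \frac{4\pi}{[n^{1/2}]-1}} = \sqrt{\frac{24\pi}{[n^{1/2}]-1}},$$
which is exactly the claimed bound.

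There is essentially no obstacle here; the only things to double-check are that the hypotheses of the isoperimetric inequality are met (the graph is $3$-regular, finite, loopless, and connected for all admissible $n,k$) and that the strict inequality in Theorem \ref{main1} survives the monotone composition with $x \mapsto \sqrt{6x}$, which it does. So the proof is a one-line combination of the two quoted results.
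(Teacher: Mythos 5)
Your proof is correct and is exactly the argument the paper intends: apply the upper bound $h(X)\leq\sqrt{2d(d-\lambda_2(X))}$ of the isoperimetric inequality with $d=3$ and then substitute the spectral gap bound $3-\lambda_2 < 4\pi/([n^{1/2}]-1)$ from Theorem \ref{main1}. Nothing further is needed.
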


Therefore, the generalised Petersen graphs have poor expansion properties for large values of $n$ i.e. $\lim_{n \rightarrow \infty} h(P(n,k)) = 0$ regardless of how $k$ varies with $n$.

\clearpage

\bibliographystyle{plain}

\bibliography{bibliov2}

\end{document}